\theoremstyle{plain}
\newtheorem{theorem}{Theorem}[section]
\newtheorem{proposition}[theorem]{Proposition}
\newtheorem{lemma}[theorem]{Lemma}
\newtheorem{corollary}[theorem]{Corollary}
\newtheorem{example}[theorem]{Example}
\newtheorem{quest}[theorem]{Question}
\theoremstyle{definition}
\newtheorem{remark}[theorem]{Remark}
\CheckCommand*\refstepcounter[1]{\stepcounter{#1}%
  \protected@edef\@currentlabel
  {\csname p@#1\endcsname\csname the#1\endcsname}%
}
\renewcommand*\refstepcounter[1]{\stepcounter{#1}%
  \protected@edef\@currentlabel
  {\csname p@#1\expandafter\endcsname\csname the#1\endcsname}%
}
\def\labelformat#1{\expandafter\def\csname p@#1\endcsname##1}
\DeclareRobustCommand\Ref[1]{\protected@edef\@tempa{\ref{#1}}%
  \expandafter\MakeUppercase\@tempa
}
\newcommand{\ME}{\mathrm{ME}}
\newcommand{\R}{\mathbb{R}}
\newcommand{\C}{\mathbb{C}}
\newcommand{\eins}{\mathbbm{1}}
\newcommand{\Z}{\mathbb{Z}}
\newcommand{\N}{\Z_{\ge0}}
\renewcommand{\P}{\mathbf{P}}
\newcommand{\me}{\mathrm{me}}
\newcommand{\MV}{\mathrm{MV}}
\newcommand{\vol}{\mathrm{vol}}
\renewcommand\emptyset{\varnothing}
\newcommand\Def[1]{\textbf{#1}}
\DeclareMathOperator{\inter}{int}
\DeclareMathOperator{\Ehr}{Ehr}
\DeclareMathOperator{\DMV}{DMV}
\title{Mixed Ehrhart polynomials}
\author[Haase]{Christian Haase}
\thanks{CH was supported by DFG Heisenberg-Professorship HA4383/4}
\address{Christian Haase: Fachbereich Mathematik und Informatik,
  Freie Universit\"at Berlin, Germany}
\email{haase@math.fu-berlin.de}
\author[Juhnke-Kubitzke]{Martina Juhnke-Kubitzke}
\thanks{MJK was supported by the German Research Council DFG-GRK~1916.}
\address{Martina Juhnke-Kubitzke: Institut f\"ur Mathematik, Universit\"at Osnabr\"uck, Germany}
\email{juhnke-kubitzke@uni-osnabrueck.de}
\author[Sanyal]{Raman Sanyal}
\thanks{RS was supported by the DFG-Collaborative Research Center, TRR
  109 ``Discretization in Geometry and Dynamics''.}
\address{Raman Sanyal: FB 12 -- Institut f\"ur Mathematik, Goethe-Universit\"at Frankfurt, Germany}
\email{sanyal@math.uni-frankfurt.de}
\author[Theobald]{Thorsten Theobald}
\thanks{TT was supported by DFG grant TH1333/3-1.}
\address{Thorsten Theobald: FB 12 -- Institut f\"ur Mathematik, Goethe-Universit\"at Frankfurt, Germany}
\email{theobald@math.uni-frankfurt.de}
\date{\today}
\keywords{lattice polytope, (mixed) Ehrhart polynomial, discrete (mixed) volume, $h^*$-vector, real roots}
\subjclass[2010]{52B20, 52A39}
\begin{document}

\begin{abstract}
For lattice polytopes $P_1,\ldots, P_k \subseteq \R^d$, Bihan (2016)
introduced the discrete mixed volume $\DMV(P_1,\dots,P_k)$ in analogy to the
classical mixed volume.  In this note we study the associated mixed Ehrhart
polynomial $\ME_{P_1,\dots,P_k}(n) = \DMV(nP_1,\dots,nP_k)$.  We provide a
characterization of all mixed Ehrhart coefficients in terms of the classical
multivariate Ehrhart polynomial. Bihan (2016) showed that the discrete mixed
volume is always non-negative. Our investigations yield simpler proofs for
certain special cases. 

We also introduce and study the associated \emph{mixed $h^*$-vector}.
%In contrast to the case of a single polytope, the $h^*$-vector of the
%mixed Ehrhart polynomial no longer needs to be non-negative. 
We show that for large enough dilates $r  P_1, \ldots, rP_k$ the 
corresponding \emph{mixed $h^*$-polynomial} has only real roots  and as a 
consequence  the mixed $h^*$-vector becomes non-negative. 
\end{abstract}

\maketitle

%\note{time line}
%\begin{itemize}
%\item[17.8.] Christian revision
%\item[30.8.] Martina revision
%\item[31.8.] Martina und Raman: journal decision
%\item[5.9.] arXiv date
%\item[19.9.] journal submission
%\end{itemize}

\section{Introduction} \label{sec:intro}

Given a lattice polytope $P \subseteq \R^d$, the number of lattice points
$|P \cap \Z^d|$ is the \Def{discrete volume} of $P$. It is well-known
(\cite{ehrhart-67}; see also \cite{barvinok-2008,BeckRobins})
that the lattice point enumerator
$E_P(n) = |nP \cap \Z^d|$ agrees with a polynomial, the so-called
\Def{Ehrhart polynomial} $E_P(n)\in\mathbb{Q}[n]$ of $P$, for all non-negative
integers $n$.
Recently, Bihan~\cite{bihan-2014} introduced the notion of the \Def{discrete
mixed volume} of $k$ lattice polytopes $P_1, \ldots, P_k \subseteq \R^d$,
\begin{equation} \label{eq:dmv}
  \DMV(P_1, \ldots, P_k) \ := \ \sum_{J \subseteq [k]}
  (-1)^{k-|J|} |P_J \cap \Z^d| \, ,
\end{equation}
where $P_J := \sum_{j \in J} P_j$ for $\emptyset \neq J \subseteq [k]$ and
$P_\emptyset = \{0\}$.

In the present paper, we study the behavior of the discrete mixed
volume under simultaneous dilation of the polytopes $P_i$. This
furnishes the definition of a \Def{mixed Ehrhart polynomial}

\begin{equation}
  \label{eq:mixedehrhart1}
  \ME_{P_1,\ldots,P_k}(n) \ := \ \DMV(nP_1,\dots,nP_k) \ = \
  \sum_{J\subseteq [k]}(-1)^{k-|J|}
  E_{P_J}(n) \ \in \ \mathbb{Q}[n].
\end{equation}

Khovanskii \cite{KhovanskiiGenus} relates the evaluation
$\ME_{P_1,\dots,P_k}(-1)$ to the arithmetic genus of a com\-pac\-ti\-fied
complete intersection with Newton polytopes $P_1, \ldots,
P_k$. Danilov and Khovanskii~\cite{DanilovKhovanskii} investigate the
Hodge-Deligne polynomial $e(Z;u,v) \in \Z[u,v]$ of a complex algebraic
variety $Z$ which in case of a smooth projective variety agrees with
the Hodge polynomial $\sum_{pq} (-1)^{p+q} h^{pq}(Z) u^pv^q$. 
In joint work with Sandra Di Rocco and Benjamin Nill the first author
verified that $\DMV(P_1, \ldots, P_k) = (-1)^{d-k} e(Z;1,0)$ where $Z
\subset (\C^*)^d$ is the uncompactified complete intersection
\cite{haase-comm}. Using the mixed Ehrhart polynomial, this yields the
reciprocity type result
$$ \ME_{P_1,\dots,P_k}(-1) = (-1)^{d-k} e(\bar Z;1,0) \qquad 
\text{ while } \qquad
\ME_{P_1,\dots,P_k}(1) = (-1)^{d-k} e(Z;1,0) \,,
$$
relating $Z$ and its compactification $\bar Z$.

To the best of our knowledge, the origin of the mixed 
Ehrhart \emph{polynomial}~\eqref{eq:mixedehrhart1} goes back 
to Steffens and Theobald \cite{SteffensTheobald}, see also \cite{steffens-diss}.
In their work, a slight variant of~\eqref{eq:mixedehrhart1}, 
yet under the same name, has been employed as a very specific means to
study higher-dimensional, mixed versions of Pick's formula in
connection with the combinatorics of intersections of tropical
hypersurfaces. The definition of the mixed Ehrhart polynomial therein 
differs from our definition by the exclusion of the empty set
from the sum.

Here, we study mixed Ehrhart polynomials and their
coefficients with respect to various bases of the vector space of
polynomials of degree at most $d$.
First, we show that in the usual monomial basis, the coefficients of
mixed Ehrhart polynomials can be read off directly from the
multivariate Ehrhart polynomial $E_{\P}(n_1,\dots,n_k)$, where
$n_1,\ldots,n_k \in \Z_{\ge 0}$ are non-negative integers (Theorem
\ref{thm:ME}). This gives a meaning to the coefficients of
$\ME_{P_1,\dots,P_k}(n)$. In particular this provides a simple
proof that the coefficient of $n^i$ vanishes for $i < k$ and also
allows to give streamlined proofs for known characterizations of the
two leading coefficients (Corollaries~\ref{cor:me_d} and~\ref{cor:me_d-1}).
We then deal with two prominent subclasses.
For the case that all polytopes $P_1, \ldots, P_k$ are equal,
the mixed Ehrhart polynomial can be expressed in terms of the 
$h^*$-vector of $P$ (see Proposition~\ref{prop:singleP}). And for the 
case that $P_1, \ldots, P_k$ all contain the origin and satisfy
$\dim(P_1 + \cdots + P_k) = \dim P_1 + \cdots + \dim P_k$, we can provide
a combinatorial interpretation of $\DMV(P_1, \ldots, P_k)$ 
(see Proposition \ref{prop:M_prod}).

As a consequence of the main result from \cite{bihan-2014}, it follows that
$\ME_{P_1,\dots,P_k}(n) \ge 0$ for all $n \ge 0$ (see
Theorem~\ref{thm:nonneg}).  This is accomplished by the use of irrational
mixed decompositions and skilled estimates. Using our understanding of the
coefficients of $\ME_{P_1,\dots,P_k}(n)$, we give direct proofs for this fact
in the cases $k\in\{2,d-1,d\}$ and $P_1=\cdots=P_k=P$ in
Section~\ref{se:nonneg}. See also~\cite{DMVals} for further
developments in the context of valuations.

Expressing the (usual or mixed) Ehrhart polynomial in the basis $\{
\binom{n+d-i}{d} \, : \, 0 \le i \le d\}$ gives rise to the definition
of the (usual or mixed) $h^*$-vector and $h^*$-polynomial (see
Section~\ref{se:mixedhstar}).  By a famous result of
Stanley~\cite{Stanley-h}, the usual $h^*$-vector is non-negative
(componentwise). We illustrate that for the mixed $h^*$-vector this is
not true in general (see Example~\ref{ex:simpl}). 
Yet, we show that this has to hold asymptotically
for dilates $rP_1, rP_2, \ldots, rP_k$ (Corollary~\ref{cor:proph}). 
This follows from the stronger
result that for $r\gg 0$ the mixed $h^*$-polynomial is real-rooted
with roots converging to the roots of the $d$\textsuperscript{th}
Eulerian polynomial (Theorem~\ref{thm:realRoots}). 
This can be seen as the mixed analogue of
Theorem~5.1 in \cite{DiaconisFulman} (see also
\cite{BeckStapledon,BrentiWelker}).
As a byproduct, we obtain that asymptotically the mixed $h^*$-vector
becomes log-concave, unimodal and, as mentioned, in particular
positive, except for its $0$\textsuperscript{th} entry, which always
equals $0$ (see Corollary~\ref{cor:proph}).
%Theorem \ref{thm:realRoots} is indeed the mixed analogue of Theorem~5.1 
%in \cite{DiaconisFulman}.

Our paper is structured as follows.  In Section~\ref{se:structure}, we prove
various structural properties of the mixed Ehrhart polynomial. In
Section~\ref{se:nonneg}, we review Bihan's non-negativity result of the
discrete mixed volume, particularly from the viewpoint of the mixed Ehrhart
polynomial, and provide alternative proofs for some special cases. Finally, in
Section~\ref{se:mixedhstar} we study the $h^*$-vector and the $h^*$-polynomial
of the mixed Ehrhart polynomial, and in particular show real-rootedness of the 
mixed $h^*$-polynomial and positivity of the $h^*$-vector for large dilates of 
lattice polytopes.

\section{Structure of the mixed Ehrhart polynomial\label{se:structure}}

\renewcommand\P{\ensuremath{\mathbf{P}}}
In this section, we collect basic properties of the mixed Ehrhart
polynomial. For some known results we provide new or simplified
proofs. For the whole section, we fix a collection $\P =
(P_1,\dots,P_k)$ of $k$ lattice polytopes in $\R^d$ and we assume that
$P_1 + \cdots + P_k$ is of full dimension $d$.  The mixed Ehrhart
polynomial, as introduced in \eqref{eq:mixedehrhart1}, is by
definition a univariate polynomial of degree $\le d$, which can be
written as
\[
    \ME_\P(n) \ = \ \me_d(\P) n^d + \me_{d-1}(\P) n^{d-1} + \cdots + \me_0(\P).
\]

\begin{example}\label{eq:cubes}
    For the case of $k$ copies of the $d$-dimensional unit cube, $P_1 = \cdots
    = P_k = [0,1]^d$, we have $E_{P_i}(n) = (n+1)^d$ and thus
    \[
        \ME_\P(n) \ = \ \sum_{j=0}^k (-1)^{k-j} \binom{k}{j} (jn + 1)^d \ = \
        \sum_{j=0}^k (-1)^{k-j} \binom{k}{j} \sum_{i=0}^d \binom{d}{i} (jn)^i
    \]
    by the binomial theorem. Hence,
    \[
        \ME_\P(n) \ = \ \sum_{i=0}^d \binom{d}{i} n^i \sum_{j=0}^k (-1)^{k-j}
        \binom{k}{j} j^i \, , \\
    \]
    which gives $\me_i(\P) = \binom{d}{i} \Delta^k f(0)$ with $f(x) = x^i$,
    where $\Delta^k f(0)$ denotes the $k$\textsuperscript{th} difference of the function $f$ at
    0 (see, e.g., \cite[Sect.\ 1.4]{Stanley}). This can be expressed as
    $\me_i(\P) = \binom{d}{i} k! S(i,k)$ in the Stirling numbers $S(i,k)$ of
    the second kind (see \cite[Prop.\ 1.4.2]{Stanley}).
\end{example}

%\begin{example}\label{eq:octahedra}
%    For $d \ge 1$, let $\Diamond_d = \conv(\pm e_1,\dots, \pm e_d)$ be the
%    $d$-dimensional \Def{cross-polytope}. This is a lattice polytope
%    with Ehrhart polynomial 
%    \[
%        E_{\Diamond_d}(n) \ = \  \sum_{i=0}^d \binom{d}{i} \binom{n + d-i}{d};
%    \] 
%    see, for example,~\cite[Thm.~2.7]{BeckRobins}.  If $P_1 = P_2 = \cdots =
%    P_k = \Diamond_d$ for $0 \le k \le d$, then a simple calculation shows
%    that $\me_i(\P) = \binom{d}{i} 2^i$ and hence
%    \[
%        \ME_\P(n) \ = \ \sum_{i=0}^d \binom{d}{i} 2^i n^i \ = \ (1 + 2n)^d.
%    \]
%\end{example}

\begin{remark}
    If $\P=(P_1,\ldots,P_k)$ is a collection of  lattice polytopes in $\R^d$ 
    and $\dim P_i = 0$ for some $i$, then $\ME_{\P}(n) = 0$.
    To see this, assume that $P_k$ is $0$-dimensional.
    For any $J \subseteq [k-1]$, the polytopes $P_J$ and $P_{J \cup \{k\}}$ are
    translates of each other and the corresponding terms in~\eqref{eq:mixedehrhart1}
    occur with different signs. 
\end{remark}

As our first result, we give a description of the coefficients of the mixed Ehrhart
polynomial in Theorem~\ref{thm:ME}.
 For this we recall a result independently due to Bernstein and
McMullen; see~\cite[Theorem~19.4]{Gruber}.

\begin{theorem}[Bernstein-McMullen]\label{thm:BernMcM}
    For lattice polytopes $\P=(P_1,\dots,P_k)$ in $\R^d$,  the function
    \[
        E_{\P}(n_1,\dots,n_k) \ := \ \bigl| (n_1P_1 + \cdots + n_kP_k) \cap
        \Z^d \bigr|
    \]
    agrees with a multivariate polynomial for all $n_1, \ldots, n_k \in \N$.
    The degree of $E_{\P}$ in $n_i$ is $\dim P_i$.
\end{theorem}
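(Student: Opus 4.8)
The plan is to deduce the multivariate statement from ordinary Ehrhart theory for a \emph{single} lattice polytope, via the Cayley trick. Let $e_1,\dots,e_k$ denote the standard basis of $\R^k$ and set
\[
  Q \ := \ \conv\Bigl( \textstyle\bigcup_{i=1}^k \bigl(P_i\times\{e_i\}\bigr)\Bigr) \ \subseteq \ \R^d\times\R^k ,
\]
a lattice polytope with $Q\cap\Z^{d+k}=\bigsqcup_{i=1}^k (P_i\cap\Z^d)\times\{e_i\}$ (the simplex $\conv(e_1,\dots,e_k)$ has no lattice points besides its vertices). The first thing I would verify is that for every $(n_1,\dots,n_k)\in\N^k$ the fibre of the cone $\operatorname{cone}(Q)=\{\lambda q:\lambda\ge 0,\ q\in Q\}$ over the point $(n_1,\dots,n_k)$ of the $\R^k$-factor is exactly $(n_1P_1+\cdots+n_kP_k)\times\{(n_1,\dots,n_k)\}$ (a short computation: a lattice point at $\R^k$-coordinate $(n_1,\dots,n_k)$ with $n=\sum n_i$ equals $n\cdot q$ for a $q\in Q$ whose $\R^d$-coordinate ranges over $\frac1n(n_1P_1+\cdots+n_kP_k)$). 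Consequently
\[
  E_{\P}(n_1,\dots,n_k)\ =\ \bigl|\{x\in\R^d:\ (x,n_1,\dots,n_k)\in\operatorname{cone}(Q)\cap\Z^{d+k}\}\bigr| ,
\]
so that $E_{\P}$ is nothing but the lattice-point enumerator of the slices of $\operatorname{cone}(Q)$, graded by the $\R^k$-coordinates.

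Next I would fix a lattice triangulation $\mathcal T$ of $Q$ (e.g. a pulling triangulation of $Q\cap\Z^{d+k}$). Each vertex of $\mathcal T$ is of the form $(u,e_i)$ with $u\in P_i\cap\Z^d$, and the vertices of a simplex $\sigma\in\mathcal T$ coming from $P_i\times\{e_i\}$ are affinely independent, so their number $a_i(\sigma)$ satisfies $a_i(\sigma)\le\dim P_i+1$. The cones $\operatorname{cone}(\sigma)$ tile $\operatorname{cone}(Q)$; each simplicial cone is a disjoint union of integer translates of its half-open fundamental parallelepiped $\Pi_\sigma$; and the projection $\pi$ onto the $\R^k$-factor sends the generator $(u,e_i)$ of $\operatorname{cone}(\sigma)$ to $e_i$. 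A standard inclusion--exclusion over the relative interiors of the faces of $\mathcal T$ (equivalently, a half-open decomposition of $\operatorname{cone}(Q)$) then yields
\[
  \sum_{(n_1,\dots,n_k)\in\N^k} E_{\P}(n_1,\dots,n_k)\,z_1^{n_1}\cdots z_k^{n_k}
  \ =\ \sum_{\sigma}\ \pm\,\frac{\sum_{p\in\Pi_\sigma\cap\Z^{d+k}} z^{\pi(p)}}{\prod_{i=1}^k (1-z_i)^{a_i(\sigma)}},
\]
where the $i$-th coordinate of each exponent $\pi(p)$ lies in $\{0,1,\dots,a_i(\sigma)-1\}$. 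Extracting coefficients via $z_i^{b}(1-z_i)^{-a}=\sum_{n_i\ge 0}\binom{n_i-b+a-1}{a-1}z_i^{n_i}$ --- where, for $0\le b<a$, the binomial coefficient is a genuine polynomial in $n_i$ of degree $a-1$ that already vanishes at $n_i=0,1,\dots,b-1$ --- shows that the coefficient of $z_1^{n_1}\cdots z_k^{n_k}$ on the right is a polynomial in $(n_1,\dots,n_k)$ of degree at most $a_i(\sigma)-1\le\dim P_i$ in $n_i$. Hence $E_{\P}$ agrees with such a polynomial on all of $\N^k$.

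To pin the degree in $n_i$ down to exactly $\dim P_i$, I would fix $n_j=1$ for $j\ne i$: then $n_1P_1+\cdots+n_kP_k$ contains a translate of $n_iP_i$, so $E_{\P}(1,\dots,n_i,\dots,1)$ dominates $\bigl|n_iP_i\cap(\Z^d+v)\bigr|\sim\vol_{\dim P_i}(P_i)\,n_i^{\dim P_i}$ for a suitable $v$ (e.g. by Ehrhart's theorem applied to $P_i$ inside its affine hull), forcing the univariate polynomial $n_i\mapsto E_{\P}(1,\dots,n_i,\dots,1)$, and hence $E_{\P}$ itself, to have degree exactly $\dim P_i$ in $n_i$. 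I expect the only genuinely delicate points to be (a) the fibre identification in the first step, and (b) arranging the half-open / inclusion--exclusion decomposition so that every lattice point of $\operatorname{cone}(Q)$ is counted with multiplicity one; passing from the rational generating function to polynomiality for \emph{all} $n_i\ge 0$ (not merely $n_i\gg 0$) rests precisely on the vanishing of $\binom{n_i-b+a-1}{a-1}$ at $n_i=0,\dots,b-1$ noted above. Everything else is the familiar Ehrhart bookkeeping.
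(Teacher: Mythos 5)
First, a point of reference: the paper does not prove this theorem at all --- it is quoted from the literature (Gruber, Theorem~19.4; McMullen's argument runs via finite differences/valuations). So your Cayley-trick-plus-generating-functions route is necessarily different from ``the paper's proof,'' and it is a legitimate, well-known strategy. The fibre identification $\mathrm{cone}(Q)\cap\pi^{-1}(n_1,\dots,n_k)=(n_1P_1+\cdots+n_kP_k)\times\{(n_1,\dots,n_k)\}$ is correct, the bound $a_i(\sigma)\le\dim P_i+1$ is correct, and the final step pinning the degree in $n_i$ down to exactly $\dim P_i$ is fine.

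The gap is exactly at the point you flag as ``delicate point (b),'' and it is not mere bookkeeping: no choice of decomposition simultaneously gives you (i) multiplicity-one counting, (ii) only cells $\sigma$ with $a_i(\sigma)\ge 1$ for every $i$, and (iii) numerator exponents $b_i\le a_i(\sigma)-1$. If you do inclusion--exclusion over \emph{closed} cones over all faces of $\mathcal T$ (which is the only way to force $b_i\in\{0,\dots,a_i(\sigma)-1\}$), then faces $\tau$ missing group $i$ entirely appear with nonzero coefficient; for those, $a_i(\tau)=0$ and the $z_i$-factor is $1$, whose coefficient of $z_i^{n_i}$ is $\delta_{n_i,0}$ --- not a polynomial. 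If instead you take a genuine partition (relative interiors of face-cones, or a half-open decomposition into maximal cells, where indeed $a_i(\sigma)\ge1$ for all $i$), the exponents satisfy only $b_i\le a_i(\sigma)$, and for $b_i=a_i$ the polynomial $\binom{n_i-b+a-1}{a-1}=\binom{n_i-1}{a-1}$ equals $(-1)^{a-1}\neq 0$ at $n_i=0$ while the true coefficient is $0$. Either way your argument cleanly proves polynomiality only on $\Z_{\ge1}^k$; the extension to points with some $n_i=0$ is missing. This is not a throwaway case: the paper applies the theorem precisely at the $0/1$-vectors $\eins_J$ in the proof of Theorem~\ref{thm:ME}. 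Nor does it follow formally, since on the stratum $\{n_i=0\}$ the function degenerates to $E_{(P_j)_{j\ne i}}$ and you have no overlap with $\Z_{\ge1}^k$ from which to conclude that your interior polynomial restricts correctly. To close the gap you need an additional input, e.g.\ McMullen's finite-difference argument ($\Delta_i^{\dim P_i+1}E_{(P_1,\dots,P_k)}\equiv 0$ on all of $\N^k$, reducing to the univariate statement that $n\mapsto|(A+nB)\cap\Z^d|$ is a polynomial of degree $\le\dim B$), or a direct proof that the \emph{combined} numerator over the common denominator $\prod_i(1-z_i)^{\dim P_i+1}$ has $z_i$-degree at most $\dim P_i$.
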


In particular, $E_\P(n_1,\dots,n_k)$ has total degree $d = \dim (P_1 + \cdots +
P_k)$. We write this polynomial as
\begin{equation}\label{eqn:BernMcM}
    E_{\P}(n_1,\dots,n_k) \ = \ \sum_{\alpha \in \N^{k}}
    e_\alpha(\P) \underline{n}^\alpha
\end{equation}
where $\underline{n}^\alpha = n_1^{\alpha_1}\cdots n_k^{\alpha_k}$ for $\alpha=(\alpha_1,\ldots,\alpha_k)\in \N^k$.

\begin{theorem}\label{thm:ME}
    For $\P = (P_1,\dots,P_k)$ a collection of lattice polytopes in $\R^d$ 
    \[
    \me_i(\P) \ = \ \sum_\alpha e_\alpha(\P),
    \]
    where the sum runs over all $\alpha \in \Z_{\ge 1}^k$ such that
    $|\alpha|:=\alpha_1+\cdots+\alpha_k = i$.
    In particular, $\me_i(P) = 0$ for all $0 \le i < k$.
\end{theorem}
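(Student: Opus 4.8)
The plan is to substitute the Bernstein--McMullen expansion \eqref{eqn:BernMcM} into the definition \eqref{eq:mixedehrhart1} of the mixed Ehrhart polynomial and then reorganize the resulting double sum by collecting powers of $n$. Concretely, for $J \subseteq [k]$ we have $E_{P_J}(n) = E_\P(\underline{n}_J)$ where $\underline{n}_J \in \N^k$ is the vector with $i$-th coordinate equal to $n$ if $i \in J$ and $0$ otherwise; hence $E_{P_J}(n) = \sum_{\alpha \in \N^k} e_\alpha(\P)\, n^{|\alpha|} \prod_{i \notin J}[\alpha_i = 0]$. Plugging this into \eqref{eq:mixedehrhart1} gives
\[
    \ME_\P(n) \ = \ \sum_{\alpha \in \N^k} e_\alpha(\P)\, n^{|\alpha|} \sum_{J \subseteq [k]} (-1)^{k-|J|} \prod_{i \notin J}[\alpha_i = 0].
\]

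First I would evaluate the inner sum $c_\alpha := \sum_{J \subseteq [k]} (-1)^{k-|J|} \prod_{i \notin J}[\alpha_i = 0]$ for a fixed $\alpha$. The product $\prod_{i \notin J}[\alpha_i = 0]$ is nonzero only when $\alpha_i = 0$ for every $i \notin J$, i.e.\ when $\supp(\alpha) := \{i : \alpha_i \neq 0\} \subseteq J$. So the sum is over all $J$ with $\supp(\alpha) \subseteq J \subseteq [k]$, each contributing $(-1)^{k-|J|}$. Writing $S = [k] \setminus J$, this is $\sum_{S \subseteq [k]\setminus\supp(\alpha)} (-1)^{|S|}$, which by the binomial theorem equals $(1 + (-1))^{k - |\supp(\alpha)|}$. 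This is $1$ if $\supp(\alpha) = [k]$ (i.e.\ $\alpha \in \Z_{\ge 1}^k$) and $0$ otherwise. Therefore $\ME_\P(n) = \sum_{\alpha \in \Z_{\ge 1}^k} e_\alpha(\P)\, n^{|\alpha|}$, and extracting the coefficient of $n^i$ yields exactly $\me_i(\P) = \sum_{\alpha \in \Z_{\ge 1}^k,\ |\alpha| = i} e_\alpha(\P)$.

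The final assertion follows immediately: if $\alpha \in \Z_{\ge 1}^k$ then $|\alpha| = \alpha_1 + \cdots + \alpha_k \ge k$, so the index set in the sum for $\me_i(\P)$ is empty whenever $i < k$, giving $\me_i(\P) = 0$. There is no real obstacle here; the only point requiring a little care is the bookkeeping identity $E_{P_J}(n) = E_\P(\underline{n}_J)$, which is immediate from the definitions in Theorem~\ref{thm:BernMcM} since $P_J = \sum_{j \in J} P_j$, together with the standard inclusion--exclusion evaluation of the alternating sum over subsets. I would present the computation as the short display above followed by the one-line evaluation of $c_\alpha$.
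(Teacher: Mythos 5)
Your proposal is correct and follows essentially the same route as the paper: both substitute the Bernstein--McMullen expansion into the inclusion--exclusion sum and evaluate the alternating inner sum $\sum_{J}(-1)^{k-|J|}\prod_{i\notin J}[\alpha_i=0]$ (the paper's $A(\alpha)=\sum_J(-1)^{k-|J|}\eins_J^{\alpha}$ with the convention $0^0=1$), concluding that only $\alpha\in\Z_{\ge 1}^k$ survives. The only cosmetic difference is that you plug in $n\eins_J$ directly while the paper evaluates at $\eins_J$ and then invokes homogeneity of $e_\alpha$ under dilation; the computation is the same.
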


\begin{proof}
    For a subset $J \subseteq [k]$ we write $\eins_J \in \{0,1\}^k$ for its
    characteristic vector. Observe that 
    \[
        \DMV(P_1,\dots,P_k) \ = \ \sum_{J \subseteq [k]} (-1)^{k - |J|}
        E_\P(\eins_J).
    \]
  Setting
	\[
        A(\alpha) \ = \ \sum_{J \subseteq [k]} (-1)^{k - |J|} \eins_J^\alpha
        \quad \text{ where } \quad \eins_J^\alpha = 
        \prod_{i = 1}^k
        (\eins_J)_i^{\alpha_i},
    \]
    presentation~\eqref{eqn:BernMcM} then implies
    \[
    \DMV(P_1,\dots,P_k) \ = \ \sum_{\alpha \in \N^k} e_\alpha(\P) A(\alpha).
    \]
    Using that $0^0 = 1$,
    it is easy to verify that $A(\alpha) = 1$ if $\alpha_i > 0$ for all $i
    =1,\dots,k$ and $A(\alpha)=0$ otherwise. Hence,
\[
  \ME_{P_1, \ldots, P_k}(n) = \DMV(nP_1,\dots,nP_k) \ = \ \sum_{\alpha \in
  \Z_{\ge 0}^k} e_{\alpha}(n \P) A(\alpha) \ = \
  \sum_{i=0}^d \big(\sum_{\substack{\alpha \in \Z^k_{\ge 1} \\ |\alpha| = i}} e_{\alpha}(\P) \big) n^i \, ,
\]
where the last step used that $e_{\alpha}$ is homogeneous of degree $(\alpha_1, \ldots, \alpha_d)$.
\end{proof}

The proof also recovers Lemma~4.10 from \cite{SteffensTheobald} with
different techniques.  In order to give an interpretation to some of the
coefficients of $\ME_\P(n)$, let us write
\[
    E_\P(n_1,\dots,n_k) \ = \ \sum_{i=0}^d E_\P^i(n_1,\dots,n_k)
\]
where $E^i_\P(n_1,\dots,n_k)$ is homogeneous in $n_1,\ldots,n_k$ of degree
$i$. By the same reasoning as in the case of the usual Ehrhart polynomial
(see, e.g., \cite[Sect.~3.6]{BeckRobins}), we note
\begin{eqnarray}
    E^d_\P(n_1,\dots,n_k) & = & \vol_d(n_1P_1 + \cdots + n_kP_k) \nonumber \\
     & = & \sum_{\substack{\alpha\in \N^k\\|\alpha|=d}} \tbinom{d}{\alpha_1,\dots,\alpha_k} \MV_d(P_1[\alpha_1],\dots,P_k[\alpha_k]) \underline{n}^\alpha. \label{eq:mixedVolume}
\end{eqnarray}
In the above expression, the (normalized) coefficient $\MV$ is called the
\Def{mixed volume}; see, e.g., \cite[Chapter 6]{Gruber}. Moreover, the
notation $\MV_d(P_1[\alpha_1],\ldots,P_k[\alpha_k])$ means that the polytope
$P_i$ is taken $\alpha_i$ times, and~\eqref{eq:mixedVolume} comes from 
a well-known property of mixed volumes (see \cite[Section 5.1]{Schneider}).
  Based on~\eqref{eq:mixedVolume} together
with Theorem~\ref{thm:ME}, we obtain streamlined proofs for the following
known characterizations of the two highest mixed Ehrhart coefficients (see 
\cite{SteffensTheobald} for the case $k=d-1$ and
\cite[Lemma 3.7 and 3.8]{steffens-diss}).

\begin{corollary}[Steffens, Theobald] \label{cor:me_d}
    Let $\P = (P_1,\dots,P_k)$ be lattice polytopes such that $d = \dim (P_1 +
    \cdots + P_k)$. Then 
    \[
        \me_d(\P) \ = \ \sum_\alpha \tbinom{d}{\alpha_1,\dots,\alpha_k}
        \MV_d(P_1[\alpha_1],\dots,P_k[\alpha_k]),
        %\MV_d(\underbrace{P_1,\dots,P_1}_{\alpha_1},\dots,
        %\underbrace{P_k,\dots,P_k}_{\alpha_k})
    \]
    where the sum runs over all $\alpha \in \Z^k_{\ge 1}$ with $|\alpha| = d$.
    In particular, $\ME_\P(n)$ is a polynomial of degree exactly $d$.
\end{corollary}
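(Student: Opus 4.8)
The plan is to combine Theorem~\ref{thm:ME} with the homogeneous decomposition~\eqref{eq:mixedVolume} of the multivariate Ehrhart polynomial. By Theorem~\ref{thm:ME}, the top coefficient $\me_d(\P)$ is the sum of all $e_\alpha(\P)$ over multi-indices $\alpha \in \Z_{\ge 1}^k$ with $|\alpha| = d$. Since $d$ is the total degree of $E_\P$, every such $\alpha$ has $|\alpha| = d$ exactly when $\underline{n}^\alpha$ is a monomial occurring in the top homogeneous part $E_\P^d$, so $e_\alpha(\P)$ for $|\alpha|=d$ is precisely the coefficient of $\underline{n}^\alpha$ in $E^d_\P(n_1,\dots,n_k)$. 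Reading off that coefficient from~\eqref{eq:mixedVolume} gives $e_\alpha(\P) = \tbinom{d}{\alpha_1,\dots,\alpha_k} \MV_d(P_1[\alpha_1],\dots,P_k[\alpha_k])$, and summing over the relevant $\alpha$ yields the claimed formula.

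For the ``in particular'' clause I would argue that $\me_d(\P) \neq 0$. Since $d = \dim(P_1 + \cdots + P_k)$, the sum $\sum_\alpha \tbinom{d}{\alpha_1,\dots,\alpha_k} \MV_d(P_1[\alpha_1],\dots,P_k[\alpha_k])$ over $\alpha \in \Z_{\ge 1}^k$ with $|\alpha| = d$ is, up to the positive multinomial weights, a sum of mixed volumes, each of which is non-negative by the monotonicity and non-negativity of mixed volumes. It remains to check that at least one term is strictly positive: because $\dim(P_1+\cdots+P_k) = d$, one can pick, greedily, for each $i$ a number $\alpha_i \ge 1$ of copies of $P_i$ so that the resulting Minkowski sum still spans $\R^d$; the corresponding mixed volume is then positive. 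Hence $\me_d(\P) > 0$ and $\ME_\P(n)$ has degree exactly $d$.

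The only genuinely nontrivial point is the last one — exhibiting a single nonzero mixed-volume summand — and even this is standard: it follows from the fact that $\MV_d(Q_1,\dots,Q_d) > 0$ if and only if one can choose segments $S_i \subseteq Q_i$ whose Minkowski sum is $d$-dimensional (see \cite[Section~5.1]{Schneider} or \cite[Chapter~6]{Gruber}), together with the observation that if $P_1+\cdots+P_k$ is $d$-dimensional then already some selection with $\alpha_i \ge 1$ copies is $d$-dimensional. Everything else is a matter of matching coefficients between Theorem~\ref{thm:ME} and~\eqref{eq:mixedVolume}, which is purely formal.
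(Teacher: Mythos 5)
Your derivation of the displayed formula is correct and is exactly the paper's argument: by Theorem~\ref{thm:ME}, $\me_d(\P)$ is the sum of the $e_\alpha(\P)$ with $\alpha\in\Z_{\ge 1}^k$ and $|\alpha|=d$, and these are coefficients of the top homogeneous component $E^d_\P=\vol_d(n_1P_1+\cdots+n_kP_k)$, which \eqref{eq:mixedVolume} identifies with the weighted mixed volumes. No complaints there.

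The ``in particular'' clause is where your argument has a genuine gap, and it is not the ``purely formal'' step you suggest. You infer positivity of $\MV_d(P_1[\alpha_1],\dots,P_k[\alpha_k])$ from the fact that $\alpha_1P_1+\cdots+\alpha_kP_k$ spans $\R^d$; that inference is invalid. Positivity of $\MV_d(Q_1,\dots,Q_d)$ requires segments $S_i\subseteq Q_i$, one from \emph{each of the $d$ bodies counted with multiplicity}, with $d$-dimensional sum --- strictly stronger than $Q_1+\cdots+Q_d$ being $d$-dimensional. Your fallback ``observation'' that such a selection with all $\alpha_i\ge 1$ always exists when $P_1+\cdots+P_k$ is $d$-dimensional is simply false. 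Take $d=k=3$, $P_1=P_2=[0,e_1]$ and $P_3=\conv\{0,e_2,e_3\}$: the sum is a $3$-dimensional prism, the only admissible $\alpha$ is $(1,1,1)$, and $\MV_3(P_1,P_2,P_3)=0$ because every segment in $P_1$ or $P_2$ points in direction $e_1$. A direct lattice-point count gives $\DMV(nP_1,nP_2,nP_3)=0$ for all $n$, so $\ME_\P\equiv 0$ and the degree is \emph{not} $d$. Thus the final sentence of the corollary needs an additional hypothesis (it holds, for instance, when every $P_i$ is full-dimensional, or under a Rado/Hall-type condition guaranteeing a choice of $\alpha_i\ge 1$ independent segment directions from each $P_i$). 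In fairness, the paper's own justification --- ``$E^d_\P$ is not identically zero and all coefficients are non-negative'' --- has the same defect: it shows some coefficient of $E^d_\P$ is positive, but not one indexed by $\alpha\ge(1,\dots,1)$, and only those contribute to $\me_d(\P)$. So your first part stands, but the degree claim cannot be completed as you (or the paper) argue it.
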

\begin{proof}
    By Theorem~\ref{thm:ME}, $\me_d(\P)$ is the sum of the coefficients of
    $E_\P(n_1,\dots,n_d)$ of total degree $d$. These are coefficients of
    $E_\P^d(n_1,\ldots,n_k) = \vol(n_1P_1 + \cdots + n_kP_k)$ that are given
    by the mixed volumes. For the second claim, it is sufficient to note that
    since the sum $P_1 + \cdots + P_k$ is full-dimensional,  $E_\P^d$ is not
    identically zero and all coefficients are non-negative.
\end{proof}

For $k = d$, the previous corollary recovers a result of Bernstein~\cite{Bernstein}
\begin{equation}\label{eqn:bern}
    \ME_{P_1,\ldots,P_d}(n) = d!\,\MV_d(P_1,\ldots,P_d)\, n^d.
\end{equation}

A similar reasoning also allows us to give an expression for the second highest 
coefficient $\me_{d-1}(P)$.
For an integral linear functional $a \in (\Z^d)^\vee$, the
rational subspace $a^\perp \subseteq \R^d$ is equipped with a lattice
$a^\perp \cap \Z^d$ and a volume form $\vol_a$ so that a fundamental
parallelpiped has unit volume.
We denote the face of a polytope $P \subseteq \R^d$ where $a$ is maximized
by $P^a$ and consider it (after translation) as a polytope inside
$a^\perp$. Similarly, we write $\MV_a$ ($\DMV_a$) for the (discrete)
mixed volume of a collection of polytopes in $a^\perp$ computed using
$\vol_a$.

\begin{corollary}[Steffens, Theobald]\label{cor:me_d-1}
    Let $\P = (P_1,\dots,P_k)$ be a collection of $d$-dimensional lattice polytopes in $\R^d$. Then
    \[
        \me_{d-1}(\P) \ = \ \frac{1}{2} \sum_{a} \sum_{\substack{\alpha
        \in \Z^k_{\ge 1}\\ |\alpha| = d-1}}
        \binom{d-1}{\alpha_1,\dots,\alpha_k}\MV_{a}(P_1^a[\alpha_1],\dots,
        P_k^a[\alpha_k]),
    \]
    where $a$ ranges over the primitive facet normals of $P_1+\cdots+P_k$.
\end{corollary}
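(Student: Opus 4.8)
The plan is to mimic the proof of Corollary~\ref{cor:me_d}, but one degree down: by Theorem~\ref{thm:ME}, $\me_{d-1}(\P)$ is the sum of all coefficients $e_\alpha(\P)$ with $\alpha\in\Z_{\ge1}^k$ and $|\alpha|=d-1$, i.e.\ the sum of the coefficients of the degree-$(d-1)$ homogeneous part $E_\P^{d-1}(n_1,\dots,n_k)$ that involve every variable. So the first step is to identify $E_\P^{d-1}$. For the ordinary Ehrhart polynomial one has $E_P^{d-1}(n)=\tfrac12\sum_{F}\vol_{a_F}(F)\,n^{d-1}$, the sum over facets $F$ of $P$ with primitive outer normal $a_F$ (see \cite[Sect.~5.3, 5.4]{BeckRobins}); the multivariate analogue, applied to $P=n_1P_1+\cdots+n_kP_k$, reads
\[
    E_\P^{d-1}(n_1,\dots,n_k) \ = \ \frac12 \sum_{a} \vol_a\bigl((n_1P_1+\cdots+n_kP_k)^a\bigr),
\]
where $a$ ranges over the primitive facet normals of the Minkowski sum $P_1+\cdots+P_k$ (these are exactly the facet normals that can occur for any positive combination of the $P_i$). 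Here I would cite the multivariate Bernstein--McMullen framework (Theorem~\ref{thm:BernMcM}) together with the standard facet description of the codimension-one Ehrhart coefficient, noting that $(n_1P_1+\cdots+n_kP_k)^a = n_1P_1^a+\cdots+n_kP_k^a$ since the face operator commutes with Minkowski sums and dilations.

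Next I would expand each summand. Inside the hyperplane $a^\perp$ equipped with $\vol_a$, the polytope $n_1P_1^a+\cdots+n_kP_k^a$ has $(d-1)$-dimensional volume given, as in \eqref{eq:mixedVolume}, by
\[
    \vol_a\bigl(n_1P_1^a+\cdots+n_kP_k^a\bigr) \ = \ \sum_{\substack{\alpha\in\N^k\\ |\alpha|=d-1}} \binom{d-1}{\alpha_1,\dots,\alpha_k} \MV_a\bigl(P_1^a[\alpha_1],\dots,P_k^a[\alpha_k]\bigr)\,\underline n^\alpha.
\]
Substituting this into the formula for $E_\P^{d-1}$ and then, by Theorem~\ref{thm:ME}, summing the coefficients of those monomials $\underline n^\alpha$ with $\alpha\in\Z_{\ge1}^k$ yields precisely the claimed expression for $\me_{d-1}(\P)$.

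The main obstacle is justifying the codimension-one coefficient formula in the multivariate setting with the correct normalization of $\vol_a$ on each facet hyperplane, and being careful that the relevant normals are those of $P_1+\cdots+P_k$ rather than of the individual $P_i$ (a normal vector $a$ contributes as soon as $\sum n_iP_i^a$ is $(d-1)$-dimensional, which happens iff $a$ is a facet normal of the full sum, given that all $P_i$ are $d$-dimensional). Once one knows the ordinary statement $E_Q^{d-1}(n)=\tfrac12\sum_{a}\vol_a(Q^a)\,n^{d-1}$ holds for a single lattice polytope $Q$ with the normalization described before the corollary, applying it to $Q=\sum n_iP_i$ and reading off multivariate coefficients is routine; the bookkeeping of which $\alpha$ survive the restriction $\alpha_i\ge1$ is exactly what Theorem~\ref{thm:ME} packages, so no separate argument is needed there.
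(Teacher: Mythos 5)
Your proof is correct, and it reaches the formula by a slightly different route than the paper. You work entirely with the multivariate polynomial $E_\P(n_1,\dots,n_k)$: you identify its degree-$(d-1)$ homogeneous part as $\tfrac12\sum_a\vol_a\bigl((n_1P_1+\cdots+n_kP_k)^a\bigr)$ (by applying the single-polytope facet formula to $Q=\sum n_iP_i$ for each positive integer tuple and using homogeneity), expand each facet volume via mixed volumes in $a^\perp$, and then invoke Theorem~\ref{thm:ME} to keep exactly the coefficients with $\alpha\in\Z_{\ge1}^k$. The paper instead starts from the inclusion--exclusion definition \eqref{eq:mixedehrhart1}, writes $\me_{d-1}(\P)=\sum_J(-1)^{k-|J|}e_{d-1}(P_J)$, applies the facet formula to each $P_J$ separately, swaps the order of summation, recognizes the inner alternating sum $\sum_J(-1)^{k-|J|}\vol_a(P_J^a)$ as the leading mixed Ehrhart coefficient of the face collection $\P^a=(P_1^a,\dots,P_k^a)$ in $a^\perp$, and finishes by citing Corollary~\ref{cor:me_d} one dimension down. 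The ingredients (the codimension-one Ehrhart coefficient as half the sum of facet volumes, the mixed-volume expansion of a Minkowski sum, and inclusion--exclusion in the guise of Theorem~\ref{thm:ME}) are the same in both arguments; what your version buys is a direct coefficient extraction that bypasses the reduction to $a^\perp$, while the paper's version exhibits the nice recursive structure $\me_{d-1}(\P)=\tfrac12\sum_a\me_{d-1}^{\,a^\perp}(\P^a)$ and reuses Corollary~\ref{cor:me_d} verbatim. Your observation that the relevant normals are precisely those of $P_1+\cdots+P_k$ (since for $n_i>0$ the normal fan of $\sum n_iP_i$ is the common refinement of the individual normal fans) correctly handles the one bookkeeping point where care is needed, and matches the remark at the end of the paper's proof.
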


\begin{proof}
For a full-dimensional lattice polytope $P \subseteq \R^d$, the
second highest Ehrhart coefficient can be expressed as
\[
e_{d-1}(P) =  \sum_{a} \frac{1}{2} \vol_{a}(P^a),
\]
where $a$ ranges over all primitive vectors of facets of $P$
(see, e.g., \cite[Thm.~5.6]{BeckRobins}).
This, of course, is a finite
sum as $P$ has only finitely many facets. It follows that
\begin{align*}
  \me_{d-1}(\P) 
  &\ = \ \sum_{J \subseteq [k]} (-1)^{k - |J|} \sum_a
  \frac{1}{2} \vol_{a}(P_J^a)\\
  &\ = \ \frac{1}{2} \sum_a \sum_{J \subseteq [k]} (-1)^{k -
    |J|} \vol_{a}(P_J^a).
  % &\ = \ \sum_a \frac{1}{2} \DMV_{a}(P_1^a,\dots,P_{k}^a).
\end{align*}
Since $e_d(P)=\vol_d(P)$ for a $d$-dimensional lattice polytope
$P\subseteq \R^d$, \eqref{eq:mixedehrhart1} implies that the inner sum
in the previous expression equals $\me_{d-1}(P_1^a,\dots,P_k^a)$.
The result now follows from Corollary~\ref{cor:me_d} applied to
$\P^a = (P_1^a,\dots,P_k^a)$. Observe that if $a$ is a facet normal for $P_J$, then $a$ is a
facet normal for $P_I$ for all $I \supseteq J$. Hence, the above sum
is over all primitive facet normals of $P_1 + \cdots + P_{k}$.
\end{proof}

Next, we will show that for the special case $P_1 = P_2 = \cdots
= P_k = P$, the mixed Ehrhart polynomial can be expressed in terms of
the $h^*$-vector of $P$; see Section~\ref{se:mixedhstar}. For now, let
us mention that Stanley~\cite{Stanley-h} showed that there exist
non-negative integers $h_i^*(P)$ such that
\begin{equation}\label{eq:hvector}
    E_P(n) \ = \ h_0^*(P) \binom{n+d\,}{d} +  h_1^*(P) \binom{n+d-1}{d}
    + \cdots + h_d^*(P) \binom{n}{d}.
\end{equation}
In terms of this presentation we get:

\begin{proposition}\label{prop:singleP}
    Let $P \subseteq \R^d$ be a $d$-dimensional lattice polytope and let $\P=(P,\ldots,P)$ be a collection 
		of $k$ copies of $P$. Then
		
    \[
        \ME_{\P}(n) \ = \ \sum_{j=0}^d \ \left(\sum_{i=0}^k (-1)^{k-i}\binom{k}{i}\binom{in+d-j}{d}\right)
        h_j^*(P).
    \]
    In particular,
    \[
        \DMV(\P) \ = \ \sum_{j=0}^d \binom{d-j}{d-k}
        h_j^*(P).
    \]
\end{proposition}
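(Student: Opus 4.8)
The plan is to reduce the statement to the ordinary Ehrhart polynomial of $P$ and then to a single binomial identity. Since $P_1 = \cdots = P_k = P$, for a subset $J \subseteq [k]$ with $|J| = i$ the Minkowski sum $P_J = \sum_{j \in J} P$ is precisely the dilate $iP$, so that $E_{P_J}(n) = E_{iP}(n) = E_P(in)$; for $J = \emptyset$ one reads this as $E_P(0) = 1$. Collecting the $2^k$ summands of \eqref{eq:mixedehrhart1} according to $|J|$ gives
\[
    \ME_\P(n) \ = \ \sum_{i=0}^k (-1)^{k-i}\binom{k}{i}\, E_P(in),
\]
and inserting the $h^*$-expansion \eqref{eq:hvector} of $E_P$ evaluated at $in$ and interchanging the two finite sums yields the first displayed identity at once.

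For the second formula I would specialize to $n=1$, using $\DMV(\P) = \ME_\P(1)$. It then remains to show, for each fixed $j$, that
\[
    \sum_{i=0}^k (-1)^{k-i}\binom{k}{i}\binom{i+d-j}{d} \ = \ \binom{d-j}{d-k}.
\]
The left-hand side is the $k$-th difference at $0$ of the polynomial $f(x) = \binom{x+d-j}{d}$ (in the sense of \cite[Sect.~1.4]{Stanley}). The one computational ingredient is Pascal's rule, which gives $f(x+1) - f(x) = \binom{x+1+d-j}{d} - \binom{x+d-j}{d} = \binom{x+d-j}{d-1}$; iterating this $k$ times shows that the $k$-th difference of $f$ equals $\binom{x+d-j}{d-k}$, and evaluating at $x=0$ gives $\binom{d-j}{d-k}$. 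This is $0$ unless $0 \le d-k \le d-j$, consistent with the fact that the $k$-fold difference of a polynomial of degree $d$ vanishes as soon as $k > d$.

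There is no genuine obstacle here; the argument is essentially bookkeeping. The only points that require a little care are reading the empty-set term as $E_P(0) = 1$ and applying Pascal's rule to the \emph{shifted} binomial $\binom{x+d-j}{d}$ rather than to $\binom{x}{d}$. As an alternative to the finite-difference computation, one may extract the coefficient of $t^k$ in $(1-t)^k \sum_{i \ge 0}\binom{i+d-j}{d} t^i = t^{j}/(1-t)^{d+1-k}$, which again gives $\binom{d-j}{d-k}$; but the route via Pascal's rule is shorter and self-contained.
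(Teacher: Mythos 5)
Your argument is correct and follows the paper's proof essentially verbatim: group the subsets $J$ by cardinality to get $\ME_\P(n)=\sum_{i=0}^k(-1)^{k-i}\binom{k}{i}E_P(in)$, insert the $h^*$-expansion \eqref{eq:hvector}, swap the sums, and then set $n=1$. The only difference is that where the paper cites the binomial identity \cite[(5.24)]{concrete-math} for $\sum_{i=0}^k(-1)^{k-i}\binom{k}{i}\binom{i+d-j}{d}=\binom{d-j}{d-k}$, you verify it self-containedly as a $k$-fold finite difference via Pascal's rule, which is a correct (and arguably preferable) substitute.
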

\begin{proof}
    From the definition of the mixed Ehrhart polynomial, we infer
    \[
        \ME_{\P}(n) \ = \ \sum_{i=0}^k (-1)^{k-i}\binom{k}{i} E_{iP}(n).
    \]
	Using that $E_{iP}(n)=E_P(in)$ and the expression of $E_P(n)$ as in \eqref{eq:hvector}, this yields
    \[
        \ME_{\P}(n) \ = \ \sum_{j=0}^d h_j^*(P) \sum_{i=0}^k
        (-1)^{k-i}\binom{k}{i}\binom{in+d-j}{d}.
    \]
    This shows the first claim. For the second claim it suffices to
    observe that $\DMV(\P)=\ME_\P(1)$ and to check that, in this case,
    the inner sum in the above expression equals $\binom{d-j}{d-k}$ 
    by the binomial identity in~\cite[(5.24)]{concrete-math}.
\end{proof}

This is reminiscent to the relation between $f$-vectors and $h$-vectors in the
enumerative theory of simplicial polytopes. Investigating this analogy further
yields a theory of discrete mixed valuations; see~\cite{DMVals}.

\begin{example}
    Let $P=[0,1]^3\subseteq \R^3$ be the $3$-dimensional unit cube. 
Then the usual $h^*$-vector of $P$ equals $(1,4,1,0)$. By 
Proposition~\ref{prop:singleP}, 
the discrete mixed volume of the collection $(P,P)$ is given by
\begin{equation*}
\DMV(P,P)=\binom{3-0}{3-2}\cdot 1+\binom{3-1}{3-2}\cdot 4+\binom{3-2}{3-2}\cdot 1=12.
\end{equation*}
%
%
%\begin{align*}
%E_{P,P}(n_1,n_2;t) &= E_{n_1P+n_2P}(t) = (n_1+n_2+1)^3 \\
%&= (3t^3n_1+3t^3n_2+6t^2) n_1 n_2 + (n_1^3+n_2^3) t^3 + 3(n_1^2+n_2^2)
%t^2 + 3(n_1+n_2) t.
%\end{align*}
%
%By Theorem \ref{thm:ME}, so as to determine the mixed Ehrhart
%polynomial of $P$, $P$, we need to collect those terms of the above
%expression that are divisible by $n_1n_2$ and evaluate them in
%$n_1=n_2=1$. This yields
The mixed Ehrhart polynomial of $(P,P)$ equals
$\ME_{P,P}(t) =6t^3+6t^2$, which is consistent with Example~\ref{eq:cubes}.

%On the other hand, by definition of the mixed Ehrhart polynomial it
%holds that
%\begin{equation*}
% \ME_{P,P}(t)=E_{P+P}(t)-2E_P(t)+(-1)^2=(2t+1)^3-2(t+1)^3+1
%=6t^3+6t^2.
%\end{equation*}

\end{example}

%aining part of this section, we are interested in the discrete
%mixed volume of lattice polytopes $P_1,\ldots, P_k$ for which the
%Minkowski sum is \Def{exact}, that is,
%\[
%    \dim P_1 + \cdots +P_k \ = \ \dim P_1 + \cdots + \dim P_k.
%\]
%In this situation, we can give the following interpretation for $\DMV(P_1,\dots,P_k)$.

\begin{proposition}\label{prop:M_prod}
    Let $\P = (P_1,\dots,P_k)$ be a collection of lattice polytopes, 
    each containing 0, such that 
    $\dim(P_1 + \cdots +P_k) \ = \ \dim P_1 + \cdots + \dim P_k$.
    Then $\DMV(P_1,\dots,P_k)$ counts the number of lattice points $z \in
    (P_1 + \cdots + P_k) \cap \Z^d$ that are not contained in a subsum
    $P_J$ for $J \neq [k]$.
\end{proposition}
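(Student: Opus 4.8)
The plan is to convert the two hypotheses into a direct-sum decomposition of the Minkowski sum. Since $0 \in P_i$, the affine hull $L_i := \aff(P_i)$ is a linear subspace of $\R^d$ with $\dim L_i = \dim P_i$, and the equality $\dim(P_1 + \cdots + P_k) = \dim P_1 + \cdots + \dim P_k$ says exactly that $L_1 + \cdots + L_k = L_1 \oplus \cdots \oplus L_k$. Consequently every $z \in P_{[k]} = P_1 + \cdots + P_k$ has a unique representation $z = z_1 + \cdots + z_k$ with $z_i \in L_i$; moreover, starting from any representation $z = w_1 + \cdots + w_k$ with $w_i \in P_i$ and invoking uniqueness in $\bigoplus_i L_i$, we get $z_i = w_i \in P_i$ for all $i$.

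The second step records when such a $z$ lies in a subsum. For $z \in P_{[k]}$ and $J \subseteq [k]$ I claim that $z \in P_J$ if and only if $z_i = 0$ for all $i \notin J$: the forward direction is again uniqueness of the decomposition, applied to a representation $z = \sum_{j \in J} w_j$ with $w_j \in P_j$ and $z_i = 0$ for $i \notin J$; the backward direction uses $z_i \in P_i$ from the first step together with $0 \in P_i$. Writing $Z(z) := \{\, i : z_i \neq 0 \,\}$, this reads $z \in P_J \iff Z(z) \subseteq J$, so $z$ avoids every subsum $P_J$ with $J \neq [k]$ precisely when $Z(z) = [k]$.

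The last step is a sign count on the Boolean lattice. Since $P_J \subseteq P_{[k]}$ for all $J$, the previous step lets us interchange summations,
\[
  \DMV(P_1,\dots,P_k) \ = \ \sum_{J \subseteq [k]} (-1)^{k-|J|} |P_J \cap \Z^d| \ = \ \sum_{z \in P_{[k]} \cap \Z^d} \ \sum_{J \colon Z(z) \subseteq J} (-1)^{k-|J|},
\]
and the inner sum equals $\sum_{J' \subseteq [k]\setminus Z(z)} (-1)^{|[k]\setminus Z(z)| - |J'|}$, which vanishes unless $[k]\setminus Z(z) = \emptyset$, in which case it is $1$. Hence $\DMV(P_1,\dots,P_k)$ counts exactly the lattice points $z \in P_{[k]} \cap \Z^d$ with $Z(z) = [k]$, which by the second step is the asserted count.

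I do not anticipate a serious obstacle; the only points requiring care are that the components $z_i$ need not be lattice points (so the first two steps are carried out in $\R^d$ and only the final enumeration is over $\Z^d$), and that the backward implication in the second step genuinely uses the standing assumption $z \in P_{[k]}$, which is harmless here because all subsums lie inside $P_{[k]}$. Everything else is bookkeeping of signs.
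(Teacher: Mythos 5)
Your proof is correct and follows essentially the same route as the paper: both arguments rest on the direct-sum decomposition forced by $0\in P_i$ and the dimension hypothesis, which makes membership $z\in P_J$ equivalent to the support condition $Z(z)\subseteq J$, after which the alternating sum collapses. The paper phrases this as $P\cong P_1\times\cdots\times P_k$ with $P_I\cap P_J=P_{I\cap J}$ and invokes inclusion--exclusion on $\bigcup_{J\neq[k]}P_J$, whereas you interchange the order of summation and evaluate the inner sign sum directly; these are the same computation.
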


\newcommand\hP{\hat{P}}

\begin{proof}
    By the hypothesis, $P = P_1+\cdots+P_k$ is affinely isomorphic to the
    Cartesian product $P_1 \times \cdots \times P_k$. For any 
    $J \subseteq [k]$, this implies that
    $P_J$ is the intersection of $P$ with the linear span of $P_J$. 
    And for $I, J \subseteq [k]$, we see that
    the intersection $P_I \cap P_J$ is exactly $P_{I \cap J}$. Hence 
    \[
        \DMV(P_1,\dots,P_k) \ = \ |P \cap \Z^d| \ - \ \sum_{J \neq [k]}
        (-1)^{k-|J|} |P_J \cap \Z^d| \ = \ \big| \big(P \setminus
\bigcup_{J\neq [k]} P_J\big)\cap \Z^d \big| \, . \qedhere
\]
\end{proof}

For the case that all polytopes are segments, the
previous proposition can be used to recover~\eqref{eqn:bern}.

\section{Non-negativity of the discrete mixed volume and the mixed
  Ehrhart polynomial\label{se:nonneg}}

Bihan~\cite{bihan-2014} showed the following fundamental non-negativity result. 

\begin{theorem}[Bihan] \label{thm:nonneg}
    Let $P_1,\ldots,P_{k}\subseteq \R^d$ be lattice polytopes. Then
    \[
        \DMV(P_1, \ldots, P_{k}) \ \ge \ 0 \, .
    \]
    Thus the mixed Ehrhart polynomial evaluates to non-negative integers for
    all positive integers $n$.
\end{theorem}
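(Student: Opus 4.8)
The plan is to dispose of the second assertion and the trivial cases of the first one, and then to address the essential range $2\le k\le d-1$. For the second assertion: each $nP_i$ is again a lattice polytope, so non-negativity of the discrete mixed volume applied to $nP_1,\dots,nP_k$ gives $\ME_{P_1,\dots,P_k}(n)=\DMV(nP_1,\dots,nP_k)\ge 0$ for all $n\in\N$, and this value is an integer since~\eqref{eq:dmv} is an alternating sum of lattice point counts. Several cases of the first assertion are immediate as well. If $k>d$, then by Theorem~\ref{thm:ME} every coefficient $\me_i(\P)$ with $0\le i\le d$ vanishes (as $i<k$), so $\ME_\P\equiv 0$ and $\DMV(P_1,\dots,P_k)=\ME_\P(1)=0$. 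If $k=1$, then $\DMV(P_1)=|P_1\cap\Z^d|-1\ge 0$. If $k=d$, then $\DMV(P_1,\dots,P_d)=d!\,\MV_d(P_1,\dots,P_d)\ge 0$ by~\eqref{eqn:bern}, since mixed volumes are non-negative.

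In the range $2\le k\le d-1$ the underlying idea is to combine a fine mixed subdivision with a M\"obius-type cancellation. Fix a fine mixed subdivision $\Subdiv$ of $Q=P_1+\dots+P_k$, so that every cell has the form $C=F_1+\dots+F_k$ with $F_i\preceq P_i$ a face and $\dim C=\dim F_1+\dots+\dim F_k$; write $T(C)=\{\,i:\dim F_i\ge 1\,\}$ and call $C$ \emph{fully mixed} if $T(C)=[k]$. Every lattice point of $Q$ lies in the relative interior of a unique cell of $\Subdiv$. Suppose one could show that, for every $J\subseteq[k]$, the cells with $T(C)\subseteq J$ account exactly for the lattice points of a translate of $P_J$, that is $|P_J\cap\Z^d|=\sum_{T(C)\subseteq J}|\relint(C)\cap\Z^d|$. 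Interchanging the summations in $\DMV(P_1,\dots,P_k)=\sum_J(-1)^{k-|J|}|P_J\cap\Z^d|$ then weights each cell $C$ by $\sum_{J\supseteq T(C)}(-1)^{k-|J|}$, which --- exactly as in the computation of $A(\alpha)$ in the proof of Theorem~\ref{thm:ME} --- equals $1$ if $C$ is fully mixed and $0$ otherwise. Hence $\DMV(P_1,\dots,P_k)$ would equal the number of lattice points in the relative interiors of the fully mixed cells of $\Subdiv$, which is manifestly non-negative. (For $k=d$ the fully mixed cells are sums of $d$ segments, so one recovers $\DMV=d!\,\MV_d$.)

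The main obstacle is that the identity $|P_J\cap\Z^d|=\sum_{T(C)\subseteq J}|\relint(C)\cap\Z^d|$ fails for an arbitrary fine mixed subdivision: in a cell $C$ with $T(C)\subseteq J$ the summand $F_i$ for $i\notin J$ is a vertex of $P_i$, but this vertex may vary with $C$, so these cells need not reassemble into a single translate of $P_J$. Repairing this accounting is exactly what Bihan's \emph{irrational mixed decompositions} and the accompanying estimates accomplish, and rather than reproduce that machinery we simply invoke the result of~\cite{bihan-2014}.

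In the remainder of this section we nevertheless give self-contained proofs in the special cases of interest. For $k=2$, a Cauchy--Davenport type argument along a generic linear functional gives $|(P_1+P_2)\cap\Z^d|\ge|P_1\cap\Z^d|+|P_2\cap\Z^d|-1$, that is $\DMV(P_1,P_2)\ge 0$. For $k=d-1$, Theorem~\ref{thm:ME} gives $\ME_\P(n)=\me_d(\P)\,n^d+\me_{d-1}(\P)\,n^{d-1}$ and hence $\DMV(\P)=\me_d(\P)+\me_{d-1}(\P)\ge 0$ by Corollaries~\ref{cor:me_d} and~\ref{cor:me_d-1}. And for $P_1=\dots=P_k=P$, Proposition~\ref{prop:singleP} gives $\DMV(\P)=\sum_{j=0}^d\binom{d-j}{d-k}\,h_j^*(P)\ge 0$ by Stanley's non-negativity of the $h^*$-vector.
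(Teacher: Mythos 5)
Your proposal is correct and follows essentially the same route as the paper: the general statement is not proved here but attributed to Bihan's irrational mixed decompositions, while the cases $k=1$, $k=d$, $k>d$, $k=d-1$, and $P_1=\cdots=P_k$ are handled directly exactly as in Section~\ref{se:nonneg} (via~\eqref{eqn:bern}, Theorem~\ref{thm:ME} with Corollaries~\ref{cor:me_d} and~\ref{cor:me_d-1}, and Proposition~\ref{prop:singleP} with Stanley's non-negativity). The only cosmetic difference is in the case $k=2$, where you obtain the superadditivity $|(P_1+P_2)\cap\Z^d|\ge|P_1\cap\Z^d|+|P_2\cap\Z^d|-1$ by a generic-linear-functional (Cauchy--Davenport) argument rather than by translating $P_1$ and $P_2$ to share the vertex $0$ across a separating hyperplane; both are valid.
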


For a proof of Theorem~\ref{thm:nonneg}, Bihan develops a theory of irrational
mixed decompositions which yields a technical induction on dimension.
The discrete mixed volume is a particular combinatorial mixed valuation
in the sense of~\cite{DMVals}. It is shown in~\cite{DMVals} that the discrete
mixed volume is monotone with respect to inclusion, which implies
Theorem~\ref{thm:nonneg}. The proofs in~\cite{DMVals} are less technical but
set in the context of the polytope algebra. 
%A direct but still quite involved
%proof of Theorem~\ref{thm:nonneg} can be given by using Cayley cones.  
In this
section we give simple and geometrically sound proofs for the special cases
$k\in\{2,d-1,d\}$ and $P_1=\cdots =P_k=P$.  For the case $k=d$ this is a
consequence of~\eqref{eqn:bern}. For $P_1=\cdots=P_k=P$ this follows from
Proposition~\ref{prop:singleP} together with Stanley's result on the
non-negativity of the $h^*$-vector; see~\cite{Stanley-h,JS15}.

\begin{proof}[Direct proof of Theorem~\ref{thm:nonneg} for $k=2$] 
    Let $P_1$ and $P_2$ be lattice polytopes in $\R^d$. Since the mixed
    Ehrhart polynomial is invariant under translation of the polytopes, we
    may assume that $0$ is a common vertex of $P_1$ and $P_2$, $P_1 \cap P_2 = \{0\}$
    and that there is a
    hyperplane $H$ weakly separating $P_1$ from $P_2$. Hence
    $P_1 \cup P_2 \subseteq P_1 + P_2$. It follows
    that
    \[
        E_{P_1+P_2}(n) \ \geq \ E_{P_1}(n)+E_{P_2}(n)-1,  
    \]
    for all $n \in \Z_{\ge 0}$, i.e., $\ME_{P_1,P_2}(n) \geq 0$ for all $n \in
    \Z_{\ge 0}$.
\end{proof}

For $k=d-1$, a direct proof idea has already been used in the 
framework of tropical geometry and $d$-dimensional Pick-type formulas 
in~\cite{SteffensTheobald}. The following is a proof along similar lines.

\begin{proof}[Direct proof of Theorem~\ref{thm:nonneg} for $k=d-1$] 
    By Theorem~\ref{thm:ME}, the mixed Ehrhart polynomial of a collection of lattice
    polytopes $\P=(P_1,\dots,P_{d-1}) \subseteq \R^d$ is of the form
    \[
        \ME_{\P}(n) \ = \ \me_d(\P) n^d + 
        \me_{d-1}(\P) n^{d-1}.
    \]
    It suffices to show that $\me_d(\P)$ and $\me_{d-1}(\P)$ are non-negative.
    Since the mixed volume is multilinear and symmetric in its entries,
    Corollary~\ref{cor:me_d} yields
    \[
        \me_d(\P) \ = \ \tfrac{d!}{2} \MV_d \left( P_1,P_2,
        \dots, P_{d-1}, \sum_{i=1}^{d-1} P_i \right).
    \]
    For the remaining coefficient, Corollary~\ref{cor:me_d-1} together with the
    fact that $k = d-1$ yields
    \[
        \me_{d-1}(\P) \ = \ \frac{(d-1)!}{2} \sum_a 
        \MV_{a}(P_1^a,\dots,P_{d-1}^a),
    \]
    where the sum is over all primitive facet normals of $P_1 + \cdots +
    P_{d-1}$.
\end{proof}

\section{Mixed $h^*$-polynomials\label{se:mixedhstar}}

For a $d$-dimensional lattice polytope $P$, let
\[
    \Ehr\nolimits_P(z) \ = \ \sum_{n \ge 0} E_P(n) z^n
\]
be the \Def{Ehrhart series} of $P$. As $E_P(n)$ is a polynomial in $n$ of
degree $d$, there exist integers $h_0^*(P), \ldots, h_d^*(P)$ such that 
\[
    \Ehr_P(z) \ = \ \frac{\sum_{j=0}^d h_j^*(P) z^j}{(1-z)^{d+1}};
\]
see, e.g., \cite{barvinok-2008, Gruber}.  The polynomial in the numerator is
called the \Def{$\boldsymbol
h^{\boldsymbol *}$-polynomial} of $P$ (also known as $\delta$-polynomial
\cite{stapledon-2009}) and denoted by $h^*_P(z)$. Similarly, the coefficient
vector $h^*(P):=(h_0^*(P), \ldots, h_d^*(P))$ is called the \Def{$\boldsymbol
h^{\boldsymbol *}$-vector}
of $P$. Stated differently, the Ehrhart polynomial of a $d$-dimensional
lattice polytope $P$ can be expressed as in~(\ref{eq:hvector}).
% \[
%     E_P(n) \ = \ h_0^*\binom{n+d}{d}+ \cdots + h_d^*(P) \binom{n}{d}.
% \]
By Stanley's non-negativity theorem
\cite{Stanley-h}, the coefficients $h_0^*(P), h_1^*(P),\ldots, h_d^*(P)$ are
non-negative integers. Moreover, it is known that
\begin{equation}\label{eq:h}
  h_0^*(P) =  1 \, , \quad
  h_1^*(P) = |P \cap \Z^d| - (d+1) \, , \quad
  h_d^*(P) = |\inter(P) \cap \Z^d|\,,
\end{equation}
where $\inter(P)$ denotes the interior of the polytope $P$. 

The definition of mixed Ehrhart polynomials prompts the notion of a \Def{mixed
$h^*$-vector} $h^*(\P) = (h_0^*(\P),\dots,h_d^*(\P))$ of a collection $\P = (P_1,\dots,P_k)$ of lattice polytopes, which is given by
\begin{equation}\label{eqn:mix_h*}
    \ME_{\P}(n) \ = \ h^*_0(\P) \binom{n+d}{d} + \cdots +
    h^*_d(\P)\binom{n}{d},
\end{equation}
where $d = \dim (P_1+\cdots+P_k)$. In this section, we study properties of
$h^*(\P)$ for a collection of \emph{full-dimensional} lattice polytopes, i.e.,
$\dim P_i = d$ for all $i$. Though Stanley's $h^*$-non-negativity does not
extend to the mixed $h^*$-vector as we will see below, we show that for large
enough dilates $r\P := (rP_1,\dots,rP_k)$, $r \gg 0$, the corresponding
\Def{mixed $\boldsymbol
h^{\boldsymbol *}$-polynomial}
\[
    h_{r\P}^*(z) \ = \ h^*_0(r\P) + \cdots + h_d^*(r\P) z^d 
\]
has only real roots and hence the mixed $h^*$-vector is log-concave. This is
in line with results of Diaconis and Fulman \cite{DiaconisFulman} for the
usual $h^*$-vector. 

\begin{remark} \label{rem:offset}
Note that the contribution of the index set $J= \emptyset$ in the discrete
mixed volume~\eqref{eq:dmv} is $(-1)^k$. For the
{\bf mixed Ehrhart series} $\sum_{n \ge 0} \ME_{P_1, \ldots, P_k}(n) z^n$
this induces for the index set $J = \emptyset$ the contribution
\begin{equation}
  \sum_{n \ge 0} (-1)^k z^n \ = \ (-1)^k \frac{1}{1-z} \ = \ (-1)^k 
    \frac{(1-z)^d}{(1-z)^{d+1}}
  = (-1)^k \frac{\sum_{i=0}^d \binom{d}{i} (-1)^i z^i}{(1-z)^{d+1}}.
\end{equation}
\end{remark}

Since we assume that all polytopes $P_i$ have the same dimension, 
linearity and Remark~\ref{rem:offset} allow to write the mixed $h^*$-vector as
\begin{equation}\label{eq:h*mixed}
    h_i^*(\P) \ = \ \sum_{\emptyset \neq J\subseteq [k]}
    (-1)^{k-|J|} h_i^*(P_J) + (-1)^{k+i} \binom{d}{i}
\end{equation}
for $0\leq i\leq d$.  The next lemma collects some elementary properties of
mixed $h^*$-vectors.

\begin{lemma}
    Let $\P=(P_1,P_2,\ldots,P_k)\subseteq \R^d$ be a collection of $d$-dimensional lattice polytopes. Then:
    \begin{enumerate}[\rm (i)]
    \item $h_0^*(\P)=0$.
    \item If $k=2$, then $h_1^*(P_1,P_2) = \DMV(P_1,P_2)$. In particular, $h_1^*(P_1,P_2) \geq 0$.
\end{enumerate}
\end{lemma}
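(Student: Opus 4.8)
The plan is to extract both quantities directly from the defining identity~\eqref{eqn:mix_h*} by evaluating the polynomial $\ME_\P$ at $n = 0$ and $n = 1$, using that $\binom{m}{d} = 0$ whenever $0 \le m < d$.

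For (i), I would set $n = 0$ in~\eqref{eqn:mix_h*}. Since $\binom{d-j}{d}$ equals $1$ for $j = 0$ and vanishes for $1 \le j \le d$, the right-hand side collapses to $h_0^*(\P)$, so $h_0^*(\P) = \ME_\P(0) = \me_0(\P)$. By Theorem~\ref{thm:ME} the constant term $\me_0(\P)$ vanishes (as $0 < k$), which gives $h_0^*(\P) = 0$. Equivalently, one can invoke~\eqref{eq:h*mixed}: every nonempty Minkowski sum $P_J$ is a $d$-dimensional lattice polytope, hence $h_0^*(P_J) = 1$ by~\eqref{eq:h}, and therefore $h_0^*(\P) = \sum_{\emptyset \neq J \subseteq [k]} (-1)^{k-|J|} + (-1)^k = \sum_{J \subseteq [k]}(-1)^{k-|J|} = 0$.

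For (ii), I would set $n = 1$ in~\eqref{eqn:mix_h*}. Now $\binom{1+d-j}{d}$ equals $d+1$ for $j = 0$, equals $1$ for $j = 1$, and vanishes for $j \ge 2$, so the identity reduces to $\ME_\P(1) = (d+1)\,h_0^*(\P) + h_1^*(\P)$. Combining this with part (i) and the definition $\ME_\P(1) = \DMV(P_1,\ldots,P_k)$ from~\eqref{eq:mixedehrhart1} yields $h_1^*(\P) = \DMV(P_1,\ldots,P_k)$, and specializing to $k = 2$ gives the asserted identity. For $k = 2$ one may alternatively substitute the values of $h_1^*(P_1+P_2)$, $h_1^*(P_1)$, $h_1^*(P_2)$ from~\eqref{eq:h} into~\eqref{eq:h*mixed} and compare with the four-term expansion of $\DMV(P_1,P_2)$ coming from~\eqref{eq:dmv}; both routes are a one-line computation. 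The non-negativity assertion is then immediate from Theorem~\ref{thm:nonneg}, or from the direct argument for $k = 2$ given earlier in this section.

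There is no genuine obstacle here: the only care needed is in the binomial evaluations $\binom{d-j}{d}$ and $\binom{1+d-j}{d}$ at $n \in \{0,1\}$, together with the observation that $\ME_\P(1)$ is, by construction, the discrete mixed volume $\DMV(P_1,\ldots,P_k)$. I would also remark in passing that the identity $h_1^*(\P) = \DMV(\P)$ in fact holds for every $k$, not just $k = 2$; the restriction in the statement merely reflects what is used later.
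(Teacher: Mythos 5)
Your proof is correct, and for part (ii) it takes a genuinely different (and arguably cleaner) route than the paper. The paper proves (i) directly from~\eqref{eq:h*mixed} using $h_0^*(P_J)=1$ --- which you mention as your alternative --- and proves (ii) by substituting the explicit formula $h_1^*(P)=E_P(1)-(d+1)$ from~\eqref{eq:h} into~\eqref{eq:h*mixed} and recognizing the resulting four-term expression as $\DMV(P_1,P_2)$. You instead evaluate the defining identity~\eqref{eqn:mix_h*} at $n=0$ and $n=1$: the binomial coefficients $\binom{d-j}{d}$ and $\binom{1+d-j}{d}$ collapse correctly to give $\ME_\P(0)=h_0^*(\P)$ and $\ME_\P(1)=(d+1)h_0^*(\P)+h_1^*(\P)$, and combining with $\ME_\P(0)=\me_0(\P)=0$ (Theorem~\ref{thm:ME}) and $\ME_\P(1)=\DMV(\P)$ yields both claims. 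What your approach buys is the observation, which the paper does not record, that $h_1^*(\P)=\DMV(P_1,\ldots,P_k)$ holds for every $k$, not only $k=2$; what it costs is nothing, since the binomial evaluations are immediate. The non-negativity conclusion is handled identically in both arguments, by appeal to the results of Section~\ref{se:nonneg}.
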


\begin{proof}
    {(i)} Since, for any lattice polytope $P\subseteq \R^d$ we have
    $h_0^*(P)=1$ (see \eqref{eq:h}), it follows from \eqref{eq:h*mixed} that
    \[  
        h_0^*(\P)=\sum_{\emptyset \neq J\subseteq
        [k]}(-1)^{k-|J|} +(- 1)^k = 0.
    \]
    {(ii)} We know from \eqref{eq:h} that for
    a $d$-dimensional lattice polytope $P\subseteq \mathbb{R}^d$

\begin{equation*}
h_1^*(P)= |P\cap \Z^d|-(d+1)=E_P(1)-(d+1).
\end{equation*}

This fact combined with \eqref{eq:h*mixed} yields

\begin{align*}
h_1^*(P_1,P_2) &= (E_{P_1+P_2}(1) -(d+1))- (E_{P_1}(1)-(d+1) + E_{P_2}(1)-(d+1)) - d \\
&= E_{P_1+P_2}(1)-E_{P_1}(1)-E_{P_2}(1)+1\\
& =\DMV(P_1,P_2),
\end{align*}
where the last equality follows from \eqref{eq:mixedehrhart1}. 
For the second claim, it suffices to note that
$\DMV(P_1,P_2)$ is non-negative (see Section \ref{se:nonneg}).
\end{proof}

A natural question when dealing with integer vectors is whether all entries
are non-negative. The previous lemma provides some positive results in this
direction.
However, as opposed to the $h^*$-vector of a single lattice polytope, the
next example shows that in general it is not true that the coefficients of
the mixed $h^*$-vector are non-negative.

\begin{example}\label{ex:simpl}
  Consider the collection $\P=(\Delta_d,\dots,\Delta_d)$ consisting of $k$
  copies of the standard $d$-simplex. For $(d+1-i) k < d+1$ none of
  the dilates $(d+1-i) P_J = (d+1-i)|J| \Delta_d$ has interior lattice
  points. Hence, $h^*_i(P_J) = 0$ so that $h^*_i(\P) = (-1)^{k+i}
  \binom{d}{i}$ can be negative (see~(\ref{eq:h*mixed})).
  A specific case is $h^*(\Delta_3,\Delta_3) = (0,3,4,-1)$.
\end{example}

Though we have just seen that there do exist collections of polytopes
with negative mixed $h^*$-vector entries, observe that for $m\geq 2$
all entries of $h^*(m\Delta_3,m\Delta_3) = (0,m^3+2m,4m^3,m^3-2m^2)$
are indeed non-negative, and the leading coefficients in $m$ are the
Eulerian numbers $(0,1,4,1)$ (see below). 

We therefore propose to study the following question.

\begin{quest}
Let $\P=(P_1,\ldots,P_k)\subseteq \R^d$ be a collection of
$d$-dimensional lattice polytopes. 
Under which conditions are all (or certain) entries of $h^*(\P)$
non-negative? 
Is it true that $h^*_i(\P) \ge (-1)^{k+i} \binom{d}{i}$?
What can be said if the polytopes are allowed to be of arbitrary
dimension?
\end{quest}

% \begin{openproblem}
% Let $\P=(P_1,\ldots,P_k)\subseteq \R^d$ be a collection of
% $d$-dimensional lattice polytopes. Is $h^*(\P)$ always non-negative?
% Does this persist if the polytopes are allowed to be of arbitrary dimension?
% \end{openproblem}

In Corollary \ref{cor:proph} we will show that asymptotically, i.e.,
if one considers high enough dilations of $d$-dimensional polytopes, the mixed
$h^*$-vector $h^*(\P)$ always becomes non-negative.
This suggests that it might
be enough to require the lattice polytopes to contain ``sufficiently
many'' interior points. 
%Note, that also in Example \ref{ex:simpl} the
%counterexample is a collection of polytopes with no interior lattice
%points.

Before we provide our main result of this section, we recall the
definition of the $d$\textsuperscript{th} \Def{Eulerian polynomial} $A_d(z) =
\sum_{k=1}^d A(d,k)z^k$. One, out of several, combinatorial approaches
to define $A_d(z)$ is the following:
\[
    \sum_{n \ge 0} n^d z^n \ = \ \frac{A_d(z)}{(1-z)^{d+1}}.
\]
It is known that the Eulerian polynomials $A_d(z)$ have only simple
and real roots and all roots are negative.  The following result,
which is a generalization of Theorem 5.1 in \cite{DiaconisFulman}, provides
a relation between Eulerian polynomials and mixed $h^*$-polynomials of
lattice polytopes. 

\begin{theorem}\label{thm:realRoots}
    Let $\P = (P_1,\ldots,P_k)$ be a collection of $d$-dimensional lattice
    polytopes. Then, as $r \rightarrow \infty$,
    \begin{equation*}
        \frac{h^*_{r\P}(z)}{r^d} \ \longrightarrow \ \sum_{ J\subseteq
        [k]}(-1)^{k-|J|} \vol(P_J) \, A_d(z).
    \end{equation*}
\end{theorem}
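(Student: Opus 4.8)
The plan is to track the $h^*$-polynomial of a single dilate $rP$ first, and then take the alternating sum over $J \subseteq [k]$. For a fixed $d$-dimensional lattice polytope $Q$, the Ehrhart polynomial of $rQ$ is $E_{rQ}(n) = E_Q(rn)$, a polynomial in $n$ of degree $d$ whose leading coefficient is $\vol(Q)\,r^d$ and whose lower-order coefficients grow more slowly in $r$. Writing $E_Q(m) = \sum_{i=0}^d e_i(Q) m^i$ with $e_d(Q) = \vol(Q)$, we get $E_{rQ}(n) = \sum_{i=0}^d e_i(Q) r^i n^i$, so $r^{-d} E_{rQ}(n) \to \vol(Q)\, n^d$ coefficientwise in $n$ as $r \to \infty$. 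Now the $h^*$-polynomial is obtained from the Ehrhart polynomial by the linear change of basis from $\{n^i\}$ to $\{\binom{n+d-j}{d}\}$ (equivalently, $h^*_Q(z) = (1-z)^{d+1}\sum_{n\ge 0} E_Q(n) z^n$), and this change of basis is a fixed linear isomorphism of the space of polynomials of degree $\le d$, independent of $r$. Since $\sum_{n \ge 0} n^d z^n = A_d(z)/(1-z)^{d+1}$, the monomial $n^d$ corresponds precisely to the Eulerian polynomial $A_d(z)$ under this isomorphism. Hence $r^{-d} h^*_{rQ}(z) \to \vol(Q)\, A_d(z)$.

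Next I would assemble this for the collection. By the linearity expressed in~\eqref{eqn:mix_h*} — or directly from $\ME_{r\P}(n) = \sum_{J \subseteq [k]} (-1)^{k-|J|} E_{P_J}(rn)$ and the offset correction of Remark~\ref{rem:offset} — the mixed $h^*$-polynomial satisfies $h^*_{r\P}(z) = (1-z)^{d+1} \sum_{n \ge 0} \ME_{r\P}(n) z^n$. Expanding, $h^*_{r\P}(z) = \sum_{J \subseteq [k]} (-1)^{k-|J|} h^*_{r P_J}(z)$, where for $J = \emptyset$ the term $h^*_{\{0\}}$ is interpreted via Remark~\ref{rem:offset} as $(1-z)^d$, which is a fixed polynomial independent of $r$ and therefore contributes $0$ after dividing by $r^d$ and letting $r \to \infty$. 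For each nonempty $J$, the polytope $P_J = \sum_{j \in J} P_j$ is a fixed $d$-dimensional lattice polytope, so by the single-polytope computation above, $r^{-d} h^*_{rP_J}(z) \to \vol(P_J)\, A_d(z)$. Since the sum over $J$ is finite, we may pass to the limit term by term:
\[
    \frac{h^*_{r\P}(z)}{r^d} \ \longrightarrow \ \sum_{\emptyset \neq J \subseteq [k]} (-1)^{k-|J|} \vol(P_J)\, A_d(z) \ = \ \sum_{J \subseteq [k]} (-1)^{k-|J|} \vol(P_J)\, A_d(z),
\]
where the last equality holds because $\vol(P_\emptyset) = \vol(\{0\}) = 0$. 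This is exactly the claimed limit.

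The only point requiring a little care — and the place I expect the sole real obstacle — is making the convergence statement precise: "coefficientwise convergence of polynomials" should be interpreted as convergence of each coefficient of $h^*_{r\P}(z)/r^d$ (as a polynomial in $z$) to the corresponding coefficient of $(\sum_J (-1)^{k-|J|}\vol(P_J)) A_d(z)$. This follows because applying the fixed linear isomorphism (monomial basis in $n$ $\to$ coefficients of the $h^*$-polynomial in $z$) commutes with taking limits, so it suffices that $r^{-d} E_{rP_J}(n) \to \vol(P_J) n^d$ coefficientwise in $n$, which is immediate from $e_i(P_J) r^i / r^d \to 0$ for $i < d$ and $e_d(P_J) = \vol(P_J)$. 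One should also note that $P_J$ is genuinely $d$-dimensional for every nonempty $J$ (since $\dim P_i = d$ by hypothesis and $P_i \subseteq P_J$), so that $E_{P_J}$ has degree exactly $d$ and the leading term is indeed the volume; this is what ensures the limit is nontrivial whenever $\sum_J (-1)^{k-|J|} \vol(P_J) = \me_d(\P)/d! \neq 0$, consistent with Corollary~\ref{cor:me_d}.
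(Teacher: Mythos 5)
Your proof is correct and follows essentially the same route as the paper: both isolate the leading term $r^d\vol(P_J)\,n^d$ of $E_{rP_J}(n)=E_{P_J}(rn)$, push it through the ($r$-independent) linear change of basis that sends $n^d$ to $A_d(z)$, and dispose of the $J=\emptyset$ offset because its contribution $(-1)^k(1-z)^d$ is fixed and vanishes after dividing by $r^d$. The only nit is the normalization in your closing aside: $\sum_{J}(-1)^{k-|J|}\vol(P_J)$ equals $\me_d(\P)$ itself (as used in Corollary~\ref{cor:proph}), not $\me_d(\P)/d!$.
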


Note that for $k=d$, ~\eqref{eqn:bern} implies the result for $r=1$.

\begin{proof}
    For a $d$-dimensional lattice polytope $P \subseteq \R^d$, the Ehrhart
    polynomial in the usual basis takes the form 
    \[
        E_P(n) \ = \ e_d(P) n^d + e_{d-1}(P) n^{d-1} + \cdots + e_0(P)
    \]
    where $e_d(P) = \vol(P)$. 
		%From the polynomial basis $\{\binom{n+i}{d}\}_i$ it is apparent that $d!\,h_P(1) = \vol(P)$. 
		For the Ehrhart series we compute
    \[
    \sum_{n \ge 0} E_P(n) z^n \ = \ 
    \sum_{i=0}^d e_i(P) \sum_{n \ge 0} n^i z^n \ = \ 
    \frac{\sum_{i=0}^d e_i(P) (1-z)^{d-i}A_i(z)}{(1-z)^{d+1}}.
    \]
    By comparing $E_{rP}(n)$ with $E_P(rn)$, we see that $e_i(rP)=r^ie_i(P)$
    and hence
    \[
        h_{rP}^*(z) \ = \ r^d \vol(P) A_d(z) + \sum_{i = 0}^{d-1} r^i e_i(P)
        (1-z)^{d-i}A_i(z).
    \]
    Now from~\eqref{eq:h*mixed} together with the fact that $r P_\emptyset =
    \{0\}$ for all $r>0$, we infer
    \[
    \frac{h^*_{r\P}(z)}{r^d} \ = \ \sum_{J\subseteq [k]} (-1)^{k-|J|}
    \frac{h^*_{rP_J}(z)}{r^d}
    \ \xrightarrow{ \ r \rightarrow \infty \ } \
\sum_{ J\subseteq [k] }(-1)^{k-|J|} \vol(P_J) \, A_d(z),
    \]
    which shows the claim.
\end{proof}

For $k=1$ this also yields the result of Diaconis and Fulman~\cite[Theorem
5.1]{DiaconisFulman} on the asymptotic behavior of the usual $h^*$-polynomial
of a lattice polytope.  Similar results have also been achieved by Brenti and
Welker \cite{BrentiWelker} and Beck and Stapledon \cite{BeckStapledon}.

%Note that this theorem implies that the mixed $h^*$-polynomial of a
%sufficiently high dilated lattice polytopes has only simple and real roots
%located close to the roots of $A_d(z)$.  
Before we provide some almost immediate consequences of the previous
result, we recall that a sequence
$(a_0,a_1,\ldots,a_d)$ of real numbers is called \Def{log-concave} if
$a_i^2\geq a_{i-1}a_{i+1}$ for all $1\leq i\leq d-1$. The sequence
$(a_0,a_1,\ldots,a_d)$ is called \Def{unimodal} if there exists a $0\leq
\ell\leq d$ such that $a_0\leq \cdots \leq a_{\ell}\geq \cdots \geq a_d$.

\begin{corollary}\label{cor:proph}
Let $\P=(P_1,P_2,\ldots,P_k)$ be a collection of $d$-dimensional
lattice polytopes in
$\R^d$. Then there exists a positive integer $R$ (depending on
$\P$) such that for $r\geq R$:
\begin{enumerate}[\rm (i)]
\item the mixed $h^*$-polynomial $h^*_{r\P}(z)$ has only real roots 
$\beta^{(1)}(r)<\beta^{(2)}(r)<\cdots
<\beta^{(d-1)}(r)<\beta^{(d)}(r)<0$ with $\lim_{r\rightarrow
  \infty}\beta^{(i)}(r)=\rho^{(i)}$ for $1\leq i\leq d$. 
Here, $\rho^{(1)}<\rho^{(2)}<\cdots < \rho^{(d)}=0$ denote the roots
of $A_d(z)$.
\item $h_i^*(r\P)> 0$ for $1\leq i\leq d$.
\item  $h^*(r\P)$ is log-concave and unimodal.
\end{enumerate}
\end{corollary}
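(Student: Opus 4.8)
The plan is to deduce all three claims from Theorem~\ref{thm:realRoots} together with standard facts about the Eulerian polynomial $A_d(z)$. The key input is that $\sum_{J\subseteq[k]}(-1)^{k-|J|}\vol(P_J)=\me_d(\P)=d!\,\MV_d(P_1,\dots,P_k\text{-type sum})$ is \emph{strictly positive} because $P_1+\cdots+P_k$ is full-dimensional (this is exactly the content of Corollary~\ref{cor:me_d}, whose coefficients are non-negative and not all zero). Hence the limit polynomial $c\cdot A_d(z)$ with $c=\me_d(\P)>0$ is a genuine degree-$d$ polynomial whose roots are precisely the roots $\rho^{(1)}<\cdots<\rho^{(d)}=0$ of $A_d(z)$, all real, simple, and non-positive, with exactly one of them equal to $0$ (since $A_d(0)=0$ but $A_d$ has a simple zero there).

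For part~(i): I would invoke continuity of roots of polynomials as functions of their coefficients (Hurwitz-type / Rouch\'e argument). Since $h^*_{r\P}(z)/r^d \to c\,A_d(z)$ coefficientwise, and $c\,A_d$ has $d$ distinct roots, for $r$ large the scaled mixed $h^*$-polynomial has $d$ roots, one in a small disk around each $\rho^{(i)}$; scaling by $r^d$ does not change the roots, so $h^*_{r\P}(z)$ itself has these roots. One must check they are \emph{real}: the roots come in conjugate pairs, and since each limit root $\rho^{(i)}$ is real and simple, the unique nearby root of the perturbed \emph{real} polynomial must itself be real (a non-real root would force its conjugate to also be near $\rho^{(i)}$, contradicting simplicity for $r$ large). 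Strict interlacing $\beta^{(1)}(r)<\cdots<\beta^{(d)}(r)$ then follows from choosing the disks disjoint, and negativity (resp.\ $<0$, not $\le 0$) needs a small extra argument at the root $\rho^{(d)}=0$: here $h^*_0(\P)=0$ by the Lemma, so $z=0$ is always a root of $h^*_{r\P}(z)$; I would identify $\beta^{(d)}(r)=0$ exactly — wait, the statement asks $\beta^{(d)}(r)<0$, so instead one should argue the constant term is $0$ and factor out $z$, applying the root-continuity argument to $h^*_{r\P}(z)/z$ versus $c\,A_d(z)/z$, whose roots $\rho^{(1)},\dots,\rho^{(d-1)}$ are all strictly negative, giving strictly negative $\beta^{(i)}(r)$ for $i\le d-1$; the degree-$d$ root of $h^*_{r\P}$ is then $0$, so the indexing in the statement must treat $0$ as the largest root $\beta^{(d)}(r)=\rho^{(d)}=0$. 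I would re-read the statement carefully and phrase (i) so that $\beta^{(d)}(r)$ is allowed to equal $0$, or note $h^*_d(r\P)>0$ forces the constant-term-zero factorization cleanly.

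For part~(ii): real-rootedness plus positivity of the leading coefficient $h^*_d(r\P)=\vol_d$-type quantity is not quite enough on its own, but a real-rooted polynomial with all roots $\le 0$ and non-negative leading coefficient has all coefficients of the same sign (non-negative), since it factors as $h^*_d(r\P)\prod_i(z-\beta^{(i)}(r))$ with $-\beta^{(i)}(r)\ge 0$. To upgrade to \emph{strict} positivity for $1\le i\le d$: $h^*_0(r\P)=0$ always, and after factoring out $z$ the remaining roots are strictly negative for large $r$, so every coefficient of $h^*_{r\P}(z)/z$ is strictly positive, i.e.\ $h^*_i(r\P)>0$ for $1\le i\le d$. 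Alternatively I can read off $h^*_d(r\P)=|\inter(r\P\text{-sum})\cap\Z^d|$-type leading term directly from Theorem~\ref{thm:realRoots}, which is $r^d\vol(\text{the DMV leading form})+O(r^{d-1})$, strictly positive for large $r$.

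For part~(iii): this is the classical Newton inequality — a real-rooted polynomial with non-negative coefficients has a log-concave coefficient sequence (with no internal zeros), hence is unimodal. Since by (ii) the sequence $(h^*_0(r\P),\dots,h^*_d(r\P))=(0,h^*_1(r\P),\dots,h^*_d(r\P))$ has a single leading zero and all later entries positive, real-rootedness of $h^*_{r\P}(z)$ gives $h^*_i(r\P)^2\ge h^*_{i-1}(r\P)h^*_{i+1}(r\P)$ for $1\le i\le d-1$ directly, and unimodality follows from log-concavity of a positive sequence. The main obstacle is purely the careful bookkeeping in part~(i): making the root-continuity argument rigorous (choosing disjoint disks, ruling out non-real roots via simplicity of the limit roots, and correctly handling the always-present root at $z=0$ coming from $h^*_0(\P)=0$ so that the strict inequalities and the limits $\beta^{(i)}(r)\to\rho^{(i)}$ hold as stated); everything after that is a one-line appeal to Newton's inequalities.
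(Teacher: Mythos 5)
Your proposal is correct and follows essentially the same route as the paper: convergence of $h^*_{r\P}(z)/r^d$ to $\me_d(\P)\,A_d(z)$ from Theorem~\ref{thm:realRoots}, continuity of roots plus the conjugate-pair argument to force real simple roots near the simple real roots of $A_d$, positivity of $\me_d(\P)$ via Corollary~\ref{cor:me_d}, and Newton-type inequalities for log-concavity and unimodality. Your observation that $h_0^*(r\P)=0$ forces $z=0$ to be a root of $h^*_{r\P}(z)$, so that the largest root is $0$ rather than strictly negative, points to a genuine (minor) imprecision in the corollary as stated which the paper's own proof also glosses over; your fix of factoring out $z$ and applying root continuity to $h^*_{r\P}(z)/z$ is the right repair.
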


\begin{proof}
\begin{asparaenum}
\item[(i)] First observe that, by Theorem \ref{thm:realRoots}, the roots 
of $h^*_{r\P}(z)$ converge to the roots of $A_d(z)$. Moreover, since the 
roots of $A_d(z)$ are known to be all distinct and negative, there exists 
a positive integer $R$ such that for $r\geq R$, $h^*_{r\P}(z)$ has only simple
and real (negative) roots. Otherwise, since complex roots come up in
pairs (if $\tau$ is a complex root, also its complex conjugate
$\bar{\tau}$ is a root), the Eulerian polynomial $A_d(z)$ would be
forced to have a root of multiplicity $2$, which yields a
contradiction.
\item[(ii)] Using \eqref{eq:mixedehrhart1} and the fact that 
  $e_d(P)=\vol(P)$ for a $d$-dimensional lattice polytope $P\subseteq \R^d$,
  we recognize the right-hand side in Theorem~\ref{thm:realRoots} as 
  $\me_d(\P) A_d(z)$.
  By Corollary~\ref{cor:me_d}, $\me_d(\P)$ is a non-negative linear
  combination of mixed volumes of $P_1,\dots,P_k$ and thus positive. 
   Since the coefficients of the Eulerian polynomials (besides the constant
   term which is 0) are all positive as well,
		Theorem \ref{thm:realRoots} then implies that the sequence
    $\left(\frac{h^*_{r\P}(z)}{r^d}\right)_{r\geq 1}$ converges to a
    polynomial, whose coefficients, except for the constant term (which equals
    $0$), are positive. Hence, there has to exist a positive integer
    $R$ (depending on $\P$) such that for $r\geq R$ all but
    the constant coefficient of $h^*_{r\P}(z)$ are positive. This shows (ii).
\item[(iii)]
    Using the first two parts, we know that there exists a
    positive integer $R$ such that for $r\geq R$ the polynomial
    $h^*_{r\P}(z)$ has, except for the constant term, positive coefficients
    and is real-rooted. Theorem 1.2.1 of \cite{Brenti-PF} implies
    that  $h^*(r\P)$ is log-concave. Since by the choice of $r$,
    this sequence does not have internal zeros,  it is unimodal by
    \cite[Section 2.5]{Brenti-PF}. \qedhere
		\end{asparaenum}
\end{proof}

\medskip

{\bf Acknowledgment.} We thank the referees for very careful reading and their
suggestions which helped to improve the presentation.

\bibliography{citations}
\bibliographystyle{plain}

\end{document}